\documentclass[12pt]{article}

\usepackage{amsmath,amssymb,amsthm}
\usepackage{authblk}
\usepackage{mathrsfs}

\usepackage{fullpage}
\usepackage{color}
\usepackage{enumerate}

\newtheorem{theorem}{Theorem}
\newtheorem{proposition}[theorem]{Proposition}

\title{Equivalence of Pseudo- and Approximate Amenability in Banach Algebras}

\author[1]{Fereidoun Ghahramani%
  \thanks{Email: \texttt{Fereidoun.Ghahramani@umanitoba.ca}. Supported by NSERC Grant 2021--05476.}}

\author[1]{Maedeh Soroushmehr%
  \thanks{Email: \texttt{m.soroushmehr@gmail.com}.}}

\author[1]{Yong Zhang%
  \thanks{Email: \texttt{Yong.Zhang@umanitoba.ca}. Supported by NSERC Grant 2022--04137.}}

\affil[1]{University of Manitoba,
Winnipeg, MB R3T 2N2, Canada}

\date{}

\begin{document}

\maketitle

\begin{abstract}
We prove that pseudo-amenability implies approximate amenability for Banach algebras with a bounded approximate identity. Consequently, these two notions of generalized amenability are equivalent for such Banach algebras. Our proof closes a gap in the paper by F. Ghahramani and Y. Zhang, Pseudo-amenable and pseudo-contractible Banach algebras, Math. Proc. Cambridge Philos. Soc. 142 (2007), 111–123.
\end{abstract}

\bigskip

\noindent\textbf{Keywords.}
Generalized amenability; $c_0$-direct sums; cardinality.
\medskip

\noindent\textbf{2020 Mathematics Subject Classification.}
Primary 46H05; Secondary 46H20.

\bigskip

The notions of approximate amenability and pseudo-amenability for Banach algebras were introduced in \cite{gl} and \cite{gz}, respectively (see also \cite{cgz,glz,gs} for further developments). The relationship between these two notions was investigated in \cite{gz}, where it was shown that approximate amenability implies pseudo-amenability for Banach algebras with a bounded approximate identity. The converse was also claimed there. However, the proof of this implication contains a gap (see \cite{z}).

The purpose of this note is to establish this implication, thereby completing the proof that approximate amenability and pseudo-amenability are equivalent for Banach algebras with a bounded approximate identity.

We refer the reader to \cite{gl,gz} for the definitions of these notions. Our argument, however, does not depend on them.

If two Banach algebras $A$ and $B$ are isometrically isomorphic, we will write $A\cong B$.

Let $(A_i)_{i \in \Lambda}$ be a family of Banach algebras.
Following the notation of \cite{gz}, the $c_0$-direct sum of the family is
\[
\bigoplus^0_{i \in \Lambda} A_i = \left\{ (a_i)_{i \in \Lambda} \in \prod_{i \in \Lambda} A_i : \lim_{i} \|a_i\|_{A_i} = 0\right\}. 
\]
With coordinatewise algebraic operations and the norm 
\[
\| (a_i)_{i \in \Lambda} \| = \sup_{i \in \Lambda} \|a_i\|_{A_i} ,
\] 
$\bigoplus^0_{i \in \Lambda} A_i$ is a Banach algebra.

Denote by $\mathscr{B}$ the class of all Banach algebras. The isometric isomorphism relation $\cong$ is an equivalence relation on $\mathscr{B}$. Moreover, if $A_i\cong B_i$ for all $i\in\Lambda$, then
\[
\bigoplus^0_{i\in\Lambda} A_i \cong \bigoplus^0_{i\in\Lambda} B_i.
\]
Let
\[
\widetilde{\mathscr{B}}=\mathscr{B}/\!\cong
\]
denote the quotient of $\mathscr{B}$ by this equivalence relation.  
For $A\in\mathscr{B}$, write $[A]$ for its equivalence class:
\[
[A]=\{B\in\mathscr{B}: B\cong A\}.
\]
We may therefore define the $c_0$-direct sum on $\widetilde{\mathscr{B}}$ by
\[
\bigoplus^0_{i\in\Lambda}[A_i]
:=
\left[\bigoplus^0_{i\in\Lambda}A_i\right],
\]
where $(A_i)_{i\in\Lambda}\subset\mathscr{B}$.

\begin{proposition}\label{zorn}
Let $A$ be a pseudo-amenable Banach algebra with a bounded approximate identity. Then $A$ is approximately amenable.
\end{proposition}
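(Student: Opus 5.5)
The plan is to use the characterisation of approximate amenability for algebras with a bounded approximate identity through approximate diagonals. Recall that $A$ is pseudo-amenable if there is a net $(m_\alpha)\subset A\widehat{\otimes}A$, not necessarily bounded, with
\[
a\cdot m_\alpha-m_\alpha\cdot a\to 0\quad\text{and}\quad \pi(m_\alpha)a\to a\qquad(a\in A).
\]
Given a bimodule $X$ and a continuous derivation $D\colon A\to X$, let $\Psi_D\colon A\widehat{\otimes}A\to X$ be the bounded map determined by $x\otimes y\mapsto x\cdot D(y)$, and put $\xi_\alpha=\Psi_D(m_\alpha)$. A direct computation with the derivation identity gives
\[
a\cdot\xi_\alpha-\xi_\alpha\cdot a=\Psi_D(a\cdot m_\alpha-m_\alpha\cdot a)+\pi(m_\alpha)\cdot D(a).
\]
The first term tends to $0$ in norm. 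The second tends to $D(a)$ whenever $X$ is left neo-unital, since then $\pi(m_\alpha)\cdot x\to x$ for all $x\in X$. Hence every derivation into a neo-unital bimodule is approximately inner.

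The remaining task is to reduce an arbitrary bimodule to the neo-unital case. With a bounded approximate identity $(e_\beta)$, Cohen factorisation shows that $A\cdot X\cdot A$ is a closed neo-unital submodule. I will then attempt the standard decomposition $D=D_0+\mathrm{ad}_\eta$, with $D_0$ taking values in $A\cdot X\cdot A$, by taking a cluster point of $D(e_\beta)$.

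This step is where I expect the real difficulty, and I suspect it is precisely the gap in \cite{gz}. For dual modules the decomposition requires weak$^*$ limits, which only give weak$^*$ approximate innerness, whereas approximate amenability needs norm convergence. Moreover, the nets produced for different finite sets of elements and different tolerances cannot be merged into one uniform net, because $(m_\alpha)$ is unbounded.

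To overcome this I will use the $c_0$-direct sum machinery introduced just before the statement. Fix a set $\Lambda$ of sufficiently large cardinality, at least the density character of $A$ together with the number of finite subsets of $A$. Put $B=\bigoplus^0_{i\in\Lambda}A_i$, where each $A_i$ is taken from the class $[A]$. I will show that $B$ is again pseudo-amenable with a bounded approximate identity. The approximate identity is built coordinatewise on finitely many coordinates. The approximate diagonal is built from finite blocks of $(m_\alpha)$, indexed by pairs consisting of a finite set $F\subset\Lambda$ and a tolerance $\varepsilon$, using the fact that $c_0$-tails are small.

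Next I apply Zorn's lemma to the family of subsets $\Lambda'\subseteq\Lambda$ with the following property: every derivation from $\bigoplus^0_{\Lambda'}[A]$ is approximately inner, with nets chosen compatibly on the coordinates. Passing through the equivalence classes in $\widetilde{\mathscr B}$ makes this family a set, so Zorn's lemma applies. Maximality, together with the cardinality bound, should allow the uniformity problem to be handled coordinate by coordinate. The idea is that each of the countably many requirements, namely a finite subset of $A$ and an $\varepsilon=1/n$, is satisfied in a separate coordinate. One then projects back to a single copy of $A$ through the contractive homomorphism $B\to A$, and approximate amenability passes to quotients by homomorphisms with dense range.

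The main obstacle is verifying that the maximal element is all of $\Lambda$. This is the step I would attack first, by trying to show directly that the union of a chain again has the required property.
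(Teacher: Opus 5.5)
Your first step is correct and standard: derivations with values in a left neo-unital module are approximately inner. The gap is that the rest of the proposal does not prove the proposition. The claim that the maximal element is all of $\Lambda$ is only hoped for, and your scheme cannot deliver it.

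\textbf{The Zorn scheme is circular.} If a nonempty $\Lambda'$ lies in your family, then $\bigoplus^0_{\Lambda'}A$ is approximately amenable, and hence so is its quotient $A$. So if the proposition failed, your family would be $\{\varnothing\}$. The only substantive step, from $\varnothing$ to a singleton, is then the proposition itself, and unions of chains play no role.

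\textbf{The $c_0$-sum gains nothing.} $B=\bigoplus^0_{i\in\Lambda}A_i$ is just another pseudo-amenable algebra with a bounded approximate identity of bound $d$, so showing $B$ is approximately amenable is the same problem. Spreading requirements over coordinates does not help either: for the projection $p_i\colon B\to A$, the derivation $D\circ p_i$ is approximately inner if and only if $D$ is.

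\textbf{The obstruction is misidentified.} Merging over finite sets and tolerances is automatic. If for each finite $F\subset A$ and $\varepsilon>0$ some $\xi$ satisfies $\|D(a)-(a\cdot\xi-\xi\cdot a)\|<\varepsilon$ for $a\in F$, then these $\xi$, indexed by $(F,\varepsilon)$, already form the required net. The real conflict is between the bounded approximate identity $(e_\beta)$ and the unboundedness of $(\pi(m_\alpha))$. Write $\mathrm{ad}_\eta(a)=a\cdot\eta-\eta\cdot a$. The derivations $D_\beta=D+\mathrm{ad}_{D(e_\beta)}$ are uniformly bounded and satisfy $D_\beta(a)=e_\beta\cdot D(a)+a\cdot D(e_\beta)+D(a-e_\beta a)$. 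In your identity applied to $D_\beta$, the error term $(\pi(m_\alpha)e_\beta-e_\beta)\cdot D(a)$ needs $\alpha$ large relative to $\beta$. The term $\pi(m_\alpha)\cdot D(a-e_\beta a)$ needs $\beta$ large relative to $\alpha$. The conflict disappears if $(\pi(m_\alpha))$ is bounded, which is condition (iii) of the Theorem, but nothing in your plan produces such a net.

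\textbf{Comparison with the paper.} The paper never analyses derivations. It applies Zorn's lemma to the collection $\Sigma$ of isomorphism classes of counterexamples, ordered by ``is a $c_0$-summand of''. It then rules out a maximal $[A]$ by adjoining $c_0(I)$ with $\operatorname{card}(I)=2^\kappa$. Your remark that passing to $\widetilde{\mathscr B}$ turns a family into a set is mistaken; your family is a set only because it lies in the power set of $\Lambda$. This is precisely where that route breaks down. A nonempty $\Sigma$ contains $[A\oplus^0c_0(I)]$ for every set $I$, so it is a proper class and Zorn's lemma does not apply. The final cardinality step merely shows that $\Sigma$ has no maximal element. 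Indeed, the same argument run with $\Sigma:=\Sigma_0$ would show $\Sigma_0=\varnothing$, although $[\mathbb C]\in\Sigma_0$. So imitating that route will not close your gap either. What is missing is an analytic ingredient, such as an approximate diagonal with $(\pi(m_\mu))$ bounded.
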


\begin{proof}
Let $d\geq 1$ be fixed.

Denote by $\Sigma_0$ the set of all $[A]\in\widetilde{\mathscr{B}}$ such that $A$ is pseudo-amenable and has a bounded approximate identity of bound at most $d$, and let $\Sigma$ be the subset of $\Sigma_0$ consisting of those $[A]$ for which $A$ is not approximately amenable.

Since pseudo-amenability, approximate amenability, and the bound of a bounded approximate identity are all preserved under isometric isomorphisms, the sets $\Sigma_0$ and $\Sigma$ are well defined. Moreover, the $c_0$-direct sum preserves the common upper bound on the bounded approximate identities of the summand algebras. 
Hence, by \cite[Proposition~2.1]{gz} and \cite[Corollary~2.3]{gl}, both $\Sigma_0$ and $\Sigma$ are closed under $c_0$-direct sums. 
Furthermore, if $[A]\in\Sigma$ and $[J]\in\Sigma_0$, then
\[
[A]\oplus^0[J]=[A\oplus^0J]\in\Sigma.
\]

We shall prove that $\Sigma=\varnothing$.

Suppose, towards a contradiction, that $\Sigma \neq \varnothing$.  
We define a relation $\preceq$ on $\Sigma$ as follows: for  $[A], [B] \in \Sigma$,  we write $[A] \preceq [B]$ if and only if the following conditions hold: 

\begin{enumerate}[(a)]
\item There exists $[J]\in \Sigma_0$ such that 
\[
[B] =[A]{\oplus}^0 [J]. 
\]
\label{A<B}
\item Whenever there  exists $[J']\in \Sigma_0$ such that 
\[
[A] = [B] \oplus^0 [J'],
\]
we have $[A] = [B]$. 
\label{A=B}
\end{enumerate} 

We now verify that $(\Sigma, \preceq)$ is a partially ordered set.

\begin{enumerate} 
\item The trivial algebra $\bf 0 =\{0\}$ belongs to $\Sigma_0$, and 
\[
[A] = [A] \oplus^0 \bf 0.
\]  
This ensures the reflexivity $[A]\preceq[A]$ holds for $[A]\in \Sigma$.

\item To show antisymmetry, let $[A], [B] \in \Sigma$, and suppose that $[A]\preceq [B]$ and $[B]\preceq [A]$. 
By condition \eqref{A<B},  there exist $[J], [J']\in \Sigma_0$ such that 
\[
[B] = [A] \oplus^0 [J], \quad \text{and } [A] = [B] \oplus^0 [J'].
\]
Applying the condition \eqref{A=B} ensured for $[A]\preceq [B]$,  we obtain $[A] = [B]$. Thus, antisymmetry holds.

\item For transitivity, suppose $[A] \preceq [B]$ and $[B] \preceq [C]$.

By condition \eqref{A<B}, there are $[J_1],[J_2]\in \Sigma_0$ such that
\[
[B]=[A] \oplus^0 [J_1] \quad \text{and} \quad [C]=[B] \oplus^0 [J_2].
\]
Then
\[
[C] =([A] \oplus^0 [J_1]) \oplus^0 [J_2] =[A] \oplus^0 ([J_1] \oplus^0 [J_2]).
\]
Thus, condition \eqref{A<B} holds for the pair $[A]$ and $[C]$.

To verify condition \eqref{A=B} for $[A]$ and $[C]$, suppose that there is $[J']\in \Sigma_0$ such that $[A] = [C] \oplus^0 [J']$. Since $[B]=[A] \oplus^0 [J_1]$, it follows that
\[
[B]=[C] \oplus^0 ([J']\oplus^0[J_1])=[C] \oplus^0 [J'\oplus^0J_1].
\]
 Hence, condition \eqref{A=B} for $[B] \preceq [C]$ implies that $[B] = [C]$, whence $[A] \preceq [C]$ since $[A] \preceq [B]$. Thus, $\preceq$ is transitive.
\end{enumerate}

Therefore, $(\Sigma, \preceq)$ is a partially ordered set.

Let $([A_\lambda])_{\lambda\in\Lambda}$ be a chain in $\Sigma$.

To show that the chain has an upper bound in $\Sigma$, without loss of generality, we may assume that the chain is strictly increasing; that is,
\[
[A_\lambda]\preceq [A_{\lambda'}]
\quad\text{and}\quad
[A_\lambda]\neq [A_{\lambda'}]
\]
whenever $\lambda<\lambda'$.

Then
\[
\bigoplus_{\lambda\in\Lambda}^0 [A_\lambda]\in\Sigma,
\]
since $\Sigma$ is closed under $c_0$-direct sums. Let
$M=\bigoplus_{\lambda\in\Lambda}^0 A_\lambda$.
We indeed have
\[
\bigoplus_{\lambda\in\Lambda}^0 [A_\lambda]=[M].
\]
We now show $[A_\lambda]\preceq [M]$ for each $\lambda\in \Lambda$. 
We may write
$[M] = [A_\lambda]\oplus^0 [J_\lambda]$, where
\[
[J_\lambda] = \left[
\bigoplus_{\substack{\mu\in\Lambda\ \mu\neq\lambda}}^0 A_\mu
\right]
\in\Sigma_0.
\]
So condition (a) holds for the pair $[A_\lambda]$ and $[M]$. 

To verify \eqref{A=B}, it suffices to show that there is no $[J']\in\Sigma_0$ such that
\[
[A_\lambda]=[M]\oplus^0[J'].
\]

Suppose, to the contrary, that such a $[J']$ exists. Then, for each $\lambda'>\lambda$, we have
\[
[A_\lambda]
=[M]\oplus^0[J']
=[A_{\lambda'}]\oplus^0\bigl([J_{\lambda'}]\oplus^0[J']\bigr).
\]
Since $\Sigma_0$ is closed under $c_0$-direct sums, we have
\[
[J_{\lambda'}]\oplus^0[J']\in\Sigma_0.
\]
Applying \eqref{A=B} for $[A_\lambda]\preceq[A_{\lambda'}]$, we conclude that
$[A_\lambda]\cong[A_{\lambda'}]$
for every $\lambda'>\lambda$, contradicting the assumption that the chain is strictly increasing.

Therefore, \eqref{A=B} holds. Hence $[A_\lambda]\preceq[M]$ for every $\lambda\in\Lambda$, so $[M]$ is an upper bound of the chain in $\Sigma$.

By Zorn's lemma, $(\Sigma,\preceq)$ has a maximal element, say $[A]$.

Suppose that $\operatorname{card}(A)=\kappa$, which is clearly invariant under isometric isomorphisms. By the Axiom of Choice, there exists a set $I$ with $\operatorname{card}(I)=2^\kappa$.

By \cite[Proposition 2.1]{gz},
\[
J:=c_0(I)=\bigoplus_{i\in I}^0\mathbb{C}
\]
is pseudo-amenable and has a bounded approximate identity of bound $1\le d$. Hence $[J]\in\Sigma_0$, and therefore
\[
[B]:=[A]\oplus^0[J]\in\Sigma.
\]
On the other hand, there is no $[J']\in \Sigma_0$ such that 
$[A]=[B]\oplus^0[J']$.
Otherwise, 
\[
A\cong A\oplus^0J\oplus^0J',
\]
and hence
\[
\kappa=\operatorname{card}(A)
\geq \operatorname{card}(J)
=2^\kappa,
\]
which is impossible. Thus, \eqref{A=B} holds for $[A]$ and $[B]$, and we have $[A]\preceq[B]$.

Since $[A]$ is maximal, we must have $[A]=[B]$, that is, $A\cong B$.  Then we will have
\[
\operatorname{card}(A)=\operatorname{card}(B)\geq \operatorname{card}(J)=2^\kappa>\kappa=\operatorname{card}(A),
\]
a contradiction again.
The contradiction shows that we must have $\Sigma=\varnothing$. 
Consequently, every pseudo-amenable Banach algebra with a bounded approximate identity is approximately amenable. 
This completes the proof.
\end{proof}

Proposition~\ref{zorn} fills a gap in the proof of \cite[Proposition~3.2]{gz}. We conclude this note by restating that proposition as a theorem.

\begin{theorem}
For a Banach algebra $A$, the following are equivalent.
\begin{enumerate}[(i)]
\item $A$ is pseudo-amenable and has a bounded approximate identity.
\item $A$ is approximately amenable and has a bounded approximate identity.
\item $A$ has an approximate diagonal $(m_\mu)_{\mu\in \Gamma}\subset A\hat{\otimes}A$ such that $(\pi(m_\mu))_{\mu\in \Gamma}$ is bounded.
\end{enumerate}
\end{theorem}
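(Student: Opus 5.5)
The plan is to prove the cycle (i)$\Rightarrow$(ii)$\Rightarrow$(iii)$\Rightarrow$(i), using Proposition~\ref{zorn} for the first implication and the results of \cite{gz} and \cite{gl} for the other two.

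\emph{(i)$\Rightarrow$(ii).} This is exactly Proposition~\ref{zorn}, and the bounded approximate identity is carried along unchanged.

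\emph{(ii)$\Rightarrow$(iii).} This direction was established in \cite{gz} and did not involve the gap. For completeness, I would argue as follows. Since $A$ is approximately amenable with a bounded approximate identity, $A^{\#}$ is approximately amenable. By the characterization in \cite{gl}, there is a net $(M_\nu)\subset A^{\#}\hat{\otimes}A^{\#}$ with $a\cdot M_\nu-M_\nu\cdot a\to 0$ and $\pi(M_\nu)\to 1$. Let $(e_\alpha)$ be the bounded approximate identity, with bound $d$. I would replace $M_\nu$ by $m=e_\alpha\cdot M_\nu\cdot e_\alpha\in A\hat{\otimes}A$. Then $\pi(m)=e_\alpha\pi(M_\nu)e_\alpha$, whose norm is at most $d^2\|\pi(M_\nu)\|$, and the right-hand side is eventually bounded because $\pi(M_\nu)\to 1$. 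The diagonal property $a\cdot m-m\cdot a\to 0$ and the condition $a\pi(m)\to a$ are then obtained by taking iterated limits over finite subsets of $A$ and tolerances $\varepsilon$, which yields a single net.

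The main care needed in this step is that $M_\nu$ itself may be unbounded. The commutator $a e_\alpha\cdot M_\nu\cdot e_\alpha - e_\alpha\cdot M_\nu\cdot e_\alpha a$ must therefore be controlled by choosing $\nu$ first and then $\alpha$, so that $\|ae_\alpha-a\|\,\|M_\nu\|$ is small. The problem reduces to showing $e_\alpha\cdot(aM_\nu-M_\nu a)\cdot e_\alpha\to 0$, and this holds because $\|e_\alpha\|\le d$.

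\emph{(iii)$\Rightarrow$(i).} Suppose $(m_\mu)$ is an approximate diagonal with $\|\pi(m_\mu)\|\le K$. By definition $A$ is then pseudo-amenable. Moreover, $(\pi(m_\mu))$ is a bounded net satisfying $a\pi(m_\mu)\to a$. Since $\pi$ is a bimodule map, the diagonal property gives $a\pi(m_\mu)-\pi(m_\mu)a=\pi(a\cdot m_\mu-m_\mu\cdot a)\to 0$. Hence $\pi(m_\mu)a\to a$ as well, so $(\pi(m_\mu))$ is a two-sided bounded approximate identity, which proves (i).

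I expect no serious obstacle in this cycle. The only delicate point is the iterated-limit bookkeeping in (ii)$\Rightarrow$(iii), and this can be taken verbatim from \cite[Proposition~3.2]{gz}.
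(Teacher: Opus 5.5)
Your cycle is exactly the paper's route: Proposition~\ref{zorn} gives (i)$\Rightarrow$(ii), and the other two implications are the standard ones from \cite{gz}, which you reconstruct correctly (you cut an approximate diagonal for $A^{\#}$ down by the bounded approximate identity, choosing $\nu$ before $\alpha$, and you observe that $(\pi(m_\mu))$ is itself a two-sided bounded approximate identity). Be aware, however, that everything therefore rests on Proposition~\ref{zorn}, whose proof applies Zorn's lemma to $\Sigma$, which is a proper class and not a set; the same argument with $\Sigma$ replaced by $\Sigma_0$, which has every closure property used there, would ``prove'' $\Sigma_0=\varnothing$ even though $[\mathbb{C}]\in\Sigma_0$, so your (i)$\Rightarrow$(ii) is only as sound as that proposition.
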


\end{document}